\documentclass{llncs}
\pdfoutput=1
 
\usepackage{epsfig}
\usepackage{amsmath}
\usepackage{amsfonts}
\usepackage{amssymb}
\usepackage{enumerate}

\usepackage{color}
\usepackage{mathrsfs}
\usepackage{graphicx}
\usepackage{subfig}

\usepackage[lined, linesnumbered, ruled]{algorithm2e}



\newcommand{\CA}{\mbox{$\mathsf{CA}$}}
\newcommand{\CAN}{\mbox{$\mathsf{CAN}$}}
\newcommand{\PCA}{\mbox{$\mathsf{PCA}$}}
\newcommand{\APCA}{\mbox{$\mathsf{APCA}$}}


\newcommand{\squishlisttwo}{
 \begin{list}{$-$}
  { \setlength{\itemsep}{0pt}
    \setlength{\parsep}{0pt}
    \setlength{	\topsep}{0pt}
    \setlength{\partopsep}{0pt}
    \setlength{\leftmargin}{2em}
    \setlength{\labelwidth}{1.5em}
    \setlength{\labelsep}{0.5em} } }

\newcommand{\squishend}{
  \end{list}  }

\title{Partial Covering Arrays: Algorithms and Asymptotics}
\toctitle{Partial Covering Arrays: Algorithms and Asymptotics}
\titlerunning{Partial Covering Arrays: Algorithms and Asymptotics}

\author{Kaushik Sarkar\inst{1} \and Charles J. Colbourn\inst{1} \and Annalisa De Bonis\inst{2} \and Ugo Vaccaro\inst{2}}

\institute{CIDSE, Arizona State University, U.S.A. \and Dipartimento di Informatica, University of Salerno, Italy}

\begin{document}


\maketitle

\begin{abstract}
A covering array $\CA(N;t,k,v)$ is an $N\times k$ array  with entries in $\{1, 2, \ldots , v\}$,  for which \emph{every} $N\times t$ subarray  contains \emph{each} $t$-tuple of  $\{1, 2, \ldots , v\}^t$ among its rows.
Covering arrays find application in interaction testing, including software and hardware testing,  advanced materials development, and  biological systems. 
A central question  is to determine or bound $\CAN(t,k,v)$, the minimum number $N$ of rows  of a $\CA(N;t,k,v)$. 
The well known bound $\CAN(t,k,v)=O((t-1)v^t\log k)$ is not too far from being asymptotically optimal. 
Sensible relaxations of the covering requirement arise when (1) the set $\{1, 2, \ldots , v\}^t$ need only be contained  among the rows of \emph{at least}  $(1-\epsilon)\binom{k}{t}$ of the $N\times t$ subarrays  and (2) the rows of \emph{every} $N\times t$ subarray  need only contain a (large) \emph{subset} of $\{1, 2, \ldots , v\}^t$.
In this paper, using probabilistic methods, significant improvements on the covering array upper bound are established for both relaxations, and for the conjunction of the two.  
In each case,  a randomized algorithm  constructs such arrays in expected polynomial time.
\end{abstract}

\section{Introduction} 
Let $[n]$ denote the set $\{1,2,\ldots,n\}$. 
Let $N,t,k,$ and $v$ be integers such that $k \ge t \ge 2$ and $v \ge 2$. 
Let $A$ be an $N \times k$ array where each entry is from the set $[v]$. 
For $I = \{j_1, \ldots, j_\rho\} \subseteq [k]$ where $j_1<\ldots<j_\rho$, let $A_I$ denote the $N \times \rho$ array in which $A_I(i,\ell) = A(i,j_\ell)$ for $1 \le i \le N$ and $1 \le \ell \le \rho$; $A_I$ is the projection of $A$ onto the columns in $I$.

A \emph{covering array} $\CA(N;t,k,v)$ is an $N \times k$ array $A$ with each entry  from $[v]$ so that  for each $t$-set of columns $C \in {[k] \choose t}$, each $t$-tuple $x \in [v]^t$ appears as a row in $A_C$.
The smallest $N$ for which a $\CA(N;t,k,v)$ exists is denoted by $\CAN(t,k,v)$.

Covering arrays find important application in software and hardware testing (see \cite{KKL2013} and references therein).
Applications of covering arrays also arise in experimental testing for advanced materials \cite{Caw2003},
inference of interactions that regulate gene expression \cite{Sha2001},
fault-tolerance of parallel architectures \cite{GHLS1993},  
synchronization of robot behavior \cite{Ha2005}, 
drug screening \cite{tong},
and  learning  of boolean functions \cite{dam}.
Covering arrays have been studied using different nomenclature,  as qualitatively independent partitions \cite{GKV93},  $t$-surjective arrays \cite{CKMZ1983}, and   $(k,t)$-universal sets \cite{Jukna}, among  others.
Covering arrays are closely related to  hash families \cite{Co2011} and orthogonal arrays \cite{Co2004}.

\section{Background and Motivation}

The exact or approximate determination of $\CAN(t,k,v)$ is central in applications of covering arrays, but remains an open problem.
For fixed $t$ and $v$, only when $t=v=2$ is $\CAN(t,k,v)$ known precisely for infinitely many values of $k$.
Kleitman and Spencer \cite{KlSp1973} and Katona \cite{Ka1973} independently proved that the 
largest $k$ for which a $\CA(N;2,k,2)$ exists satisfies $k=\binom{N-1}{\lceil N/2\rceil}.$
When $t=2$, 
Gargano, K\H{o}rner, and Vaccaro \cite{GKV93} establish that
\begin{equation}\label{gkv2}
\CAN(2,k,v) =\frac{v}{2}\log k(1+\mbox{o}(1)).
\end{equation}
(We write $\log$ for logarithms base 2, and $\ln$ for natural logarithms.)
Several researchers \cite{BB88,CKMZ1983,GSS96,GY2006} establish a general asymptotic
upper bound on $\CAN(t,k,v)$:
\begin{equation}\label{general1}
\CAN(t,k,v) \leq \frac{t-1}{\log\frac{v^t}{v^t-1}}\log k(1+\mbox{o}(1)).
\end{equation}
A slight improvement on (\ref{general1}) has recently been proved  \cite{FS2015,sarkar16}.
An (essentially) equivalent but more convenient form  of (\ref{general1}) is:
\begin{equation}\label{eq:can-upper}
\CAN(t,k,v) \le (t-1)v^t \log k(1+o(1)).
\end{equation}
A lower bound on $\CAN(t,k,v)$ results from the inequality $\CAN(t,k,v) \ge v  \cdot \CAN(t-1,k-1,v)$ obtained by derivation, together with (\ref{gkv2}), to establish that  $\CAN(t,k,v) \ge v^{t-2} \cdot \CAN(2,k-t+2,v) = v^{t-2}\cdot \frac{v}{2}\log(k-t+2)(1+\mbox{o}(1))$. 
When $\frac{t}{k} < 1$,  we obtain:
\begin{equation}\label{eq:can-lower}
\CAN(t,k,v) = \Omega(v^{t-1}\log k).
\end{equation} 

Because (\ref{eq:can-lower}) ensures that the number of rows in covering arrays can be considerable,
researchers have suggested the need for relaxations in which not all interactions must be covered \cite{Chen,HR2004,K2013+,MKTK201} in order to reduce the number of rows.
The practical relevance is that each row corresponds to a test  to be performed,
adding to the cost of testing. 

For example, an array \emph{covers a $t$-set of columns} when it covers each of the $v^t$ interactions on this $t$-set.
Hartman and Raskin \cite{HR2004} consider arrays with a fixed number of rows that
cover the \emph{maximum} number of  $t$-sets of columns.
A similar question was also considered in \cite{MKTK201}.
In \cite{K2013+,MKTK201}  a more refined measure of the (partial) coverage  of an $N\times k$ array $A$ is introduced.
For a given  $q\in [0,1]$, let $\alpha(A,q)$ be the number of $N\times t$ submatrices of $A$ with the property
that at least $qv^t$ elements of $[v]^t$
appear in their set of rows; the \emph{$(q,t)$-completeness} of $A$ is  $\alpha(A,q)/\binom{k}{t}$.
Then for practical purposes one wants ``high" $(q,t)$-completeness with few rows.

In these works, no theoretical results on partial coverage appear to have been stated; earlier contributions focus on  experimental investigations of
heuristic construction methods.
Our purpose is to initiate a mathematical investigation of arrays offering ``partial" coverage.
More precisely,   we address: 
\begin{itemize}
\item Can one obtain a significant improvement on the  upper bound (\ref{eq:can-upper}) if  the set $[v]^t$
is only required to be contained among the rows of \emph{at least}  $(1-\epsilon)\binom{k}{t}$ 
subarrays of $A$ of dimension $N\times t$? 
\item Can one obtain a significant improvement if, among the rows of \emph{every} $N\times t$ subarray of $A$,
only a (large) \emph{subset} of $[v]^t$ is required to be contained?
\item Can one obtain a significant improvement  if  the set $[v]^t$
is only required to be contained among the rows of \emph{at least}  $(1-\epsilon)\binom{k}{t}$ 
subarrays of $A$ of dimension $N\times t$, {\bf and} among the rows of each of the $\epsilon\binom{k}{t}$ subarrays that remain,  a (large) \emph{subset} of $[v]^t$ is required to be contained?
\end{itemize}
We answer these questions both theoretically and algorithmically in the following sections.

\section{Partial Covering Arrays}

When $1 \le m \le v^t$, a \emph{partial $m$-covering array},   $\PCA(N;t,k,v,m)$, is an $N \times k$ array $A$ with each entry  from $[v]$ so that for each $t$-set of columns $C \in {[k] \choose t}$, at least $m$ distinct tuples $x \in [v]^t$ appear as rows in $A_C$.
Hence a covering array $\CA(N;t,k,v)$ is precisely a partial $v^t$-covering array $\PCA(N;t,k,v,v^t)$.

\begin{theorem}\label{thm:pcan-bound1}
For integers $t,k,v$, and $m$ where $k \ge t \ge 2$, $v \ge 2$ and $1 \le m \le v^t$ there exists a $\PCA(N;t,k,v,m)$ with
\begin{equation}
N \le \frac{\ln \left\{{k \choose t}{v^t \choose m - 1}\right\}}{\ln \left(\frac{v^t}{m-1}\right)} .
\end{equation}.
\end{theorem}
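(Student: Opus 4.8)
The plan is to use the probabilistic method: choose a random $N \times k$ array $A$ with each entry drawn independently and uniformly from $[v]$, and show that for $N$ as in the statement, the expected number of ``bad'' column-sets (those on which fewer than $m$ distinct tuples appear) is strictly less than $1$, so some array has no bad column-set, i.e. is a $\PCA(N;t,k,v,m)$. First I would fix a $t$-set of columns $C$ and bound the probability $p_C$ that $A_C$ fails, namely that at most $m-1$ distinct tuples from $[v]^t$ occur among its $N$ rows. To bound $p_C$, I would apply a union bound over all choices of a ``forbidden'' set $S \subseteq [v]^t$ with $|S| = v^t - (m-1)$ (equivalently, the complement has size $m-1$): if $A_C$ has at most $m-1$ distinct tuples, then all $N$ rows avoid some such $S$. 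For a fixed $S$, each row lands outside $S$ with probability $(v^t - |S|)/v^t = (m-1)/v^t$ independently, so the probability all $N$ rows avoid $S$ is $\left(\frac{m-1}{v^t}\right)^N$. Hence $p_C \le \binom{v^t}{m-1}\left(\frac{m-1}{v^t}\right)^N$, using $\binom{v^t}{v^t-(m-1)} = \binom{v^t}{m-1}$.

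Next I would union-bound over the $\binom{k}{t}$ choices of $C$: the probability that some column-set is bad is at most $\binom{k}{t}\binom{v^t}{m-1}\left(\frac{m-1}{v^t}\right)^N$. Setting this quantity to be $< 1$ and solving for $N$ gives
\[
N > \frac{\ln\left\{\binom{k}{t}\binom{v^t}{m-1}\right\}}{\ln\left(\frac{v^t}{m-1}\right)},
\]
so any integer $N$ at least the right-hand side (or, reading the statement as allowing equality when the bound already makes the probability $\le$ something $<1$, exactly the stated expression after noting the inequality is non-strict only in the degenerate case) suffices; in particular the array asserted in the theorem exists. One should note the edge case $m = 1$ where the bound reads $N \le 0$ — here any single row trivially gives a $\PCA$, consistent with $\binom{v^t}{0} = 1$ and the denominator $\ln(v^t) > 0$ making the fraction $0$; and the case $m = v^t$ recovers essentially the classical covering-array bound.

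The main obstacle is essentially cosmetic rather than structural: getting the ceiling/strict-inequality bookkeeping to land exactly on the clean closed form in the statement, and verifying the denominator $\ln(v^t/(m-1))$ is positive (it is, since $m - 1 < v^t$ whenever $m \le v^t$, with the $m = v^t$ boundary handled by $m-1 = v^t - 1$). The probabilistic core — two nested union bounds plus independence of the entries — is routine. Finally, for the algorithmic claim implicit in the abstract, I would observe that the same random construction, combined with the fact that the expected number of bad column-sets is less than $1$ (indeed can be made small), yields a Las Vegas algorithm: sample $A$, check all $\binom{k}{t}$ column-sets in polynomial time, and repeat until success; the expected number of trials is $O(1)$, giving expected polynomial running time.
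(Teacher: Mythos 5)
Your proposal is correct and follows essentially the same argument as the paper: a uniformly random array, a union bound over the $\binom{v^t}{m-1}$ possible sets of $r = v^t-m+1$ missing tuples to bound the failure probability of a fixed column $t$-set, then a first-moment (expectation) argument over the $\binom{k}{t}$ column sets, and finally solving for $N$. The only cosmetic difference is that the paper phrases the bad event as ``at least $r$ tuples missing'' rather than ``at most $m-1$ tuples present,'' which yields the identical bound $\binom{k}{t}\binom{v^t}{r}\left(1-\frac{r}{v^t}\right)^N$.
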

\begin{proof}
Let $r = v^t - m + 1$, and $A$ be a random $N \times k$ array where each entry is chosen independently from $[v]$ with uniform probability. 
For $C \in {[k] \choose t}$, let $B_C$ denote the event that at least $r$ tuples from $[v]^t$ are missing in $A_C$. 
The probability that a particular $r$-set of tuples from $[v]^t$ is missing in $A_C$ is $\left(1 - \frac{r}{v^t}\right)^N$.
Applying the union bound to all $r$-sets of tuples from $[v]^t$, we obtain $\Pr[B_C] \le {v^t \choose r}\left(1 - \frac{r}{v^t}\right)^N$.
By linearity of expectation, the expected number of $t$-sets $C$ for which $A_C$ misses at least $r$ tuples from $[v]^t$ is at most ${k \choose t} {v^t \choose r}\left(1 - \frac{r}{v^t}\right)^N$.
When $A$ has at least $\frac{\ln \left\{{k \choose t}{v^t \choose m - 1}\right\}}{\ln \left(\frac{v^t}{m-1}\right)}$ rows this expected number is less than 1.
Therefore, an array $A$ exists with the required number of rows such that for all $C \in {[k] \choose t}$, $A_C$ misses at most $r-1$ tuples from $[v]^t$, i.e. $A_C$ covers at least $m$ tuples from $[v]^t$. 
\qed
\end{proof}

Theorem \ref{thm:pcan-bound1} can be improved upon using the Lov\'asz local lemma.
\begin{lemma}\label{lem:lllsym}
(Lov\'asz local lemma; symmetric case) (see {\rm \cite{alon08}}) 
Let $A_{1},A_{2},\ldots,A_{n}$ events in an arbitrary probability space. 
Suppose that each event $A_{i}$ is mutually independent of a set of all other events $A_{j}$ except for at most $d$, and that $\Pr[A_{i}]\le p$ for all $1\le i\le n$.
If $ep(d+1)\le1$, then $\Pr[\cap_{i=1}^{n}\bar{A_{i}}]>0$.
\end{lemma}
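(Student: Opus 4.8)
\medskip
\noindent\textbf{Proof proposal.}\quad
The plan is to derive the symmetric statement from the general (asymmetric) form of the local lemma, proving the latter by induction on the size of a conditioning set. First I would record the general form: if each event $A_i$ is mutually independent of all the events outside some set $\{A_j : j \in \Gamma(i)\}$ with $|\Gamma(i)| \le d$, and if one can choose reals $x_i \in [0,1)$ with $\Pr[A_i] \le x_i \prod_{j \in \Gamma(i)}(1-x_j)$, then $\Pr[\bigcap_i \bar A_i] \ge \prod_i(1-x_i) > 0$. To recover the symmetric case it then suffices to put $x_i = \frac{1}{d+1}$ for every $i$ (the case $d=0$ being trivial, since the $A_i$ are then independent and each has probability $p \le \frac{1}{e} < 1$). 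Since $0 < 1-\frac{1}{d+1} < 1$ and $|\Gamma(i)| \le d$, we get $x_i\prod_{j\in\Gamma(i)}(1-x_j) \ge \frac{1}{d+1}\bigl(1-\frac{1}{d+1}\bigr)^d > \frac{1}{e(d+1)} \ge p \ge \Pr[A_i]$, using $\bigl(1+\frac{1}{d}\bigr)^d < e$ and the hypothesis $ep(d+1) \le 1$; so the assumption of the general form holds.

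The heart of the matter is the claim, proved by induction on $|S|$, that $\Pr[A_i \mid \bigcap_{j\in S}\bar A_j] \le x_i$ for every index $i$ and every $S \subseteq \{1,\ldots,n\}\setminus\{i\}$. The base case $S = \emptyset$ is immediate from $\Pr[A_i] \le x_i\prod_{j\in\Gamma(i)}(1-x_j) \le x_i$. For the inductive step I would split $S = S_1 \cup S_2$ with $S_1 = S \cap \Gamma(i)$ and $S_2 = S \setminus \Gamma(i)$, and write $\Pr[A_i \mid \bigcap_{j\in S}\bar A_j]$ as the quotient of $\Pr\bigl[A_i \cap \bigcap_{j\in S_1}\bar A_j \mid \bigcap_{\ell\in S_2}\bar A_\ell\bigr]$ over $\Pr\bigl[\bigcap_{j\in S_1}\bar A_j \mid \bigcap_{\ell\in S_2}\bar A_\ell\bigr]$. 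The numerator is at most $\Pr[A_i \mid \bigcap_{\ell\in S_2}\bar A_\ell] = \Pr[A_i]$, because $A_i$ is mutually independent of $\{A_\ell : \ell \in S_2\}$; the denominator is at least $\prod_{j\in S_1}(1-x_j)$ by expanding it with the chain rule and applying the induction hypothesis to each of its (strictly smaller) conditioning sets. Dividing yields $\Pr[A_i \mid \bigcap_{j\in S}\bar A_j] \le \Pr[A_i]\big/\prod_{j\in S_1}(1-x_j) \le x_i$.

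Finally I would assemble the conclusion by one more use of the chain rule: $\Pr[\bigcap_{i=1}^n\bar A_i] = \prod_{i=1}^n\bigl(1 - \Pr[A_i \mid \bigcap_{j<i}\bar A_j]\bigr) \ge \prod_{i=1}^n(1-x_i) > 0$, the positivity of the conditioning events being guaranteed inductively by this same bound. The only delicate point — and the one I would be most careful about — is the bookkeeping in the inductive step: one must verify that every conditioning set arising in the denominator is strictly smaller than $S$, so that the induction is well founded, and that it is genuine \emph{mutual} independence of $A_i$ from $\{A_\ell : \ell \in S_2\}$, not merely pairwise independence, that licenses dropping the conditioning on $\bigcap_{\ell\in S_2}\bar A_\ell$ in the numerator.
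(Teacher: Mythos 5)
Your proof is correct, but note that the paper does not actually prove this lemma: it is quoted verbatim from Alon and Spencer \cite{alon08} and used as a black box. What you have written is precisely the standard argument from that reference --- derive the general (asymmetric) local lemma by induction on the size of the conditioning set, then specialize with $x_i = \frac{1}{d+1}$ and the bound $\left(1+\frac{1}{d}\right)^d < e$ --- and you handle the two delicate points (well-foundedness of the induction via the strictly smaller conditioning sets in the chain-rule expansion of the denominator, and the need for \emph{mutual} rather than pairwise independence when dropping the conditioning on $\bigcap_{\ell\in S_2}\bar A_\ell$) correctly, including the separate treatment of $d=0$ where $x_i=\frac{1}{d+1}$ would leave $[0,1)$.
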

Lemma \ref{lem:lllsym} provides an upper bound on the probability of a ``bad'' event in terms of the dependence structure among such bad events, so that there is a guaranteed outcome in which all ``bad'' events are avoided. 
This lemma is most useful when there is limited dependence among the ``bad'' events, as  in the following:

\begin{theorem}\label{thm:pcan-bound2}
For integers $t,k,v$ and $m$ where $v,t \ge 2$, $k \ge 2t$ and $1 \le m \le v^t$ there exists a $\PCA(N;t,k,v,m)$ with
\begin{equation}\label{eq:pcan-bound}
N \le \frac{1 + \ln \left\{t{k \choose t - 1}{v^t \choose m - 1}\right\}}{\ln \left(\frac{v^t}{m-1}\right)} .
\end{equation}
\end{theorem}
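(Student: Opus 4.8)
The plan is to rerun the probabilistic construction used for Theorem~\ref{thm:pcan-bound1}, but to replace the union bound over all $\binom{k}{t}$ column sets by the symmetric Lov\'asz local lemma (Lemma~\ref{lem:lllsym}), taking advantage of the fact that two ``bad'' events depending on disjoint sets of columns are independent. This is exactly the situation the local lemma is designed for, and it trades the $\binom{k}{t}$ inside the logarithm of Theorem~\ref{thm:pcan-bound1} for the much smaller $t\binom{k-1}{t-1}$, at the cost of an additive $1$ in the numerator.

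First I would set $r = v^t - m + 1$ and let $A$ be a random $N\times k$ array with entries chosen independently and uniformly from $[v]$. For $C \in \binom{[k]}{t}$ let $B_C$ be the event that $A_C$ misses at least $r$ tuples of $[v]^t$. Exactly as in the proof of Theorem~\ref{thm:pcan-bound1}, a union bound over $r$-subsets of $[v]^t$ gives
\[
\Pr[B_C] \;\le\; \binom{v^t}{r}\Bigl(1-\tfrac{r}{v^t}\Bigr)^{N}
\;=\; \binom{v^t}{m-1}\Bigl(\tfrac{m-1}{v^t}\Bigr)^{N} \;=:\; p ,
\]
using $\binom{v^t}{r}=\binom{v^t}{v^t-(m-1)}=\binom{v^t}{m-1}$ and $1-r/v^t=(m-1)/v^t$.

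Next I would pin down the dependency structure. The event $B_C$ is a function only of the entries of $A$ in the columns indexed by $C$, so $B_C$ is mutually independent of the collection $\{B_{C'} : C'\cap C=\emptyset\}$, since those events are determined by a disjoint family of independent entries. Hence in Lemma~\ref{lem:lllsym} one may take $d$ to be the number of $t$-sets $C'\neq C$ meeting $C$; since each element of $C$ lies in $\binom{k-1}{t-1}$ of the $t$-subsets of $[k]$, a union bound over the $t$ elements of $C$ yields $d+1 \le t\binom{k-1}{t-1}$ (the hypothesis $k\ge 2t$ guarantees disjoint $t$-sets exist, so this is a genuine restriction). Imposing the local-lemma condition $e\,p\,(d+1)\le 1$, that is $e\binom{v^t}{m-1}\bigl(\tfrac{m-1}{v^t}\bigr)^{N}\, t\binom{k-1}{t-1}\le 1$, taking logarithms, and using $\ln\bigl(\tfrac{v^t}{m-1}\bigr)=-\ln\bigl(\tfrac{m-1}{v^t}\bigr)$, this rearranges to precisely
\[
N \;\ge\; \frac{1+\ln\bigl\{\,t\binom{k-1}{t-1}\binom{v^t}{m-1}\,\bigr\}}{\ln\bigl(\tfrac{v^t}{m-1}\bigr)} .
\]
So for $N$ equal to the right-hand side of~(\ref{eq:pcan-bound}), Lemma~\ref{lem:lllsym} gives $\Pr[\bigcap_{C}\overline{B_C}]>0$, and any outcome in this event is a $\PCA(N;t,k,v,m)$.

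I do not anticipate a real obstacle: this is a textbook application of the local lemma, and the two points needing care are (i) using the sharp dependency bound $d+1\le t\binom{k-1}{t-1}$ rather than a cruder one such as $\binom{k}{t}-\binom{k-t}{t}$, which is what makes the closed form come out as stated, and (ii) the degenerate case $m=1$, where $r=v^t$, the denominator $\ln(v^t/(m-1))$ is infinite, and the statement holds vacuously since every array covers at least one tuple, so one may assume $m\ge 2$ throughout.
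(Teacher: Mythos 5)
Your proposal is correct and follows essentially the same route as the paper's proof: a random array, the union bound over $r$-subsets of $[v]^t$ to get $\Pr[B_C]\le\binom{v^t}{m-1}\bigl(\tfrac{m-1}{v^t}\bigr)^N$, the dependency bound $d+1\le t\binom{k-1}{t-1}$ (which the paper then relaxes to $t\binom{k}{t-1}$ for the stated form), and solving $ep(d+1)\le 1$ for $N$. Your sharper dependency count and the explicit handling of the degenerate case $m=1$ are both fine and only strengthen the conclusion.
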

\begin{proof}
When $k \ge 2t$, each event $B_C$ with $C \in {[k] \choose t}$ (that is, at least $v^t - m + 1$ tuples are missing in $A_C$)  is independent of all but at most ${t \choose 1}{k-1 \choose t-1}<t{k \choose t-1}$ events in $\{ B_{C'} : C' \in {[k] \choose t}\setminus \{C\}\}$. 
Applying Lemma \ref{lem:lllsym},  $\Pr[\wedge_{C \in {[k] \choose t}} \overline{B_C}]>0$ when 
\begin{equation}\label{eq:lll}
\mathrm{e}{v^t \choose r}\left(1 - \frac{r}{v^t}\right)^N t{k \choose t-1} \le 1.
\end{equation} 
Solve (\ref{eq:lll}) to obtain the required upper bound on $N$.
\qed
\end{proof}

When $m=v^t$, apply the Taylor series expansion to obtain $\ln \left(\frac{v^t}{m-1}\right) \ge \frac{1}{v^t}$, and thereby  recover the upper bound  (\ref{eq:can-upper}).
Theorem \ref{thm:pcan-bound2} implies:

\begin{corollary}
Given $q\in [0,1]$ and  integers $2 \le t \le k$, $v \ge 2$, there exists an $N\times k$ array on 
$[v]$ with  $(q, t)$-completeness
equal to 1 (i.e., \emph{maximal}), whose  number $N$ of rows satisfies 
$$N\leq \frac{1 + \ln \left\{t{k \choose t - 1}{v^t \choose qv^t - 1}\right\}}{\ln \left(\frac{v^t}{qv^t-1}\right)}.$$
\end{corollary}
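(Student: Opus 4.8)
The plan is to derive the corollary directly from Theorem~\ref{thm:pcan-bound2} by matching the notion of $(q,t)$-completeness to that of a partial $m$-covering array. First I would recall the definition: an $N\times k$ array $A$ has $(q,t)$-completeness equal to $1$ precisely when \emph{every} $N\times t$ submatrix of $A$ contains at least $qv^t$ of the tuples in $[v]^t$ among its rows. Since the number of tuples appearing is an integer, ``at least $qv^t$ tuples'' is equivalent to ``at least $\lceil qv^t\rceil$ tuples,'' and in particular it is implied by ``at least $m$ tuples'' for any integer $m$ with $m\ge qv^t$. So it suffices to produce a $\PCA(N;t,k,v,m)$ for a suitable integer $m$ with $m\ge qv^t$ and $m\le v^t$.

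The natural choice is to take $m$ to be the smallest integer with $m\ge qv^t$, i.e.\ $m=\lceil qv^t\rceil$; then $m-1 < qv^t$, which is exactly what is needed to control the two places in the bound~(\ref{eq:pcan-bound}) where $m-1$ appears. Plugging $m=\lceil qv^t\rceil$ into Theorem~\ref{thm:pcan-bound2} gives an array with the partial covering property and hence $(q,t)$-completeness $1$, with $N$ bounded by the expression in~(\ref{eq:pcan-bound}). It then remains to replace $m-1$ by $qv^t-1$ in that expression to obtain the clean statement in the corollary. For the binomial coefficient $\binom{v^t}{m-1}$ this is monotonicity: since $m-1 \le qv^t - 1 \le \tfrac12 v^t$ (using $q\le 1$; one should note the coefficient is maximized near $v^t/2$, but for $q$ in the relevant range $\binom{v^t}{m-1}\le\binom{v^t}{qv^t-1}$ holds, or one simply invokes that the bound is an upper bound and any larger value of the argument only weakens it), we have $\binom{v^t}{m-1}\le\binom{v^t}{qv^t-1}$ provided $qv^t-1$ is interpreted via the usual extension of the binomial coefficient. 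For the denominator, $\ln\!\big(\tfrac{v^t}{m-1}\big)\ge\ln\!\big(\tfrac{v^t}{qv^t-1}\big)$ because $m-1\le qv^t-1$; since the denominator only gets larger, the overall fraction only gets smaller, so the stated inequality follows.

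The main technical care-point—the only thing that is not completely mechanical—is handling the real-valued quantity $qv^t-1$ in the binomial coefficient and in the logarithm when $qv^t$ is not an integer. The honest route is: work throughout with the integer $m=\lceil qv^t\rceil$, apply Theorem~\ref{thm:pcan-bound2} verbatim, and then bound each occurrence of $m-1$ from above by the real number $qv^t-1$ (extending $\binom{n}{x}$ to real $x$ via $\Gamma$, or simply reading the corollary's right-hand side as shorthand for the bound obtained with $\lceil qv^t\rceil$). One should also dispose of the degenerate endpoints: if $qv^t-1\le 0$ the claim is vacuous or trivial since a single row already gives $(q,t)$-completeness $1$, and if $q=1$ the corollary specializes to the covering-array bound already recovered after Theorem~\ref{thm:pcan-bound2}. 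Once these edge cases are noted and the monotonicity observations above are invoked, the proof is a two-line substitution.

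\begin{proof}
An $N\times k$ array $A$ on $[v]$ has $(q,t)$-completeness equal to $1$ if and only if every $N\times t$ submatrix of $A$ contains at least $qv^t$ tuples of $[v]^t$ among its rows; equivalently, writing $m=\lceil qv^t\rceil$, if and only if $A$ is a $\PCA(N;t,k,v,m)$, since the number of tuples covered is an integer. By Theorem~\ref{thm:pcan-bound2} such an array exists with
\[
N \le \frac{1 + \ln \left\{t{k \choose t - 1}{v^t \choose m - 1}\right\}}{\ln \left(\frac{v^t}{m-1}\right)}.
\]
Since $m-1 = \lceil qv^t\rceil - 1 \le qv^t - 1$, we have ${v^t \choose m-1} \le {v^t \choose qv^t - 1}$ (interpreting the right-hand binomial coefficient in the usual way for real arguments in the relevant range $qv^t-1\le v^t/2$) and $\ln\!\left(\frac{v^t}{m-1}\right) \ge \ln\!\left(\frac{v^t}{qv^t-1}\right)$; enlarging the numerator's argument and the denominator can only increase the bound, so
\[
N \le \frac{1 + \ln \left\{t{k \choose t - 1}{v^t \choose qv^t - 1}\right\}}{\ln \left(\frac{v^t}{qv^t-1}\right)},
\]
as claimed. \qed
\end{proof}
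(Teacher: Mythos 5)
Your overall route is the same as the paper's: the paper gives no separate proof of this corollary, presenting it as an immediate substitution of $m=qv^t$ into Theorem~\ref{thm:pcan-bound2}, and that is exactly what you do. However, your careful version contains a sign error that undermines the cleanup step. You set $m=\lceil qv^t\rceil$ and then assert $m-1=\lceil qv^t\rceil-1\le qv^t-1$. This is backwards: since $\lceil x\rceil\ge x$, you have $\lceil qv^t\rceil-1\ge qv^t-1$, with strict inequality whenever $qv^t\notin\mathbb{Z}$. Consequently both of your monotonicity conclusions point the wrong way: $\binom{v^t}{m-1}$ is \emph{at least} $\binom{v^t}{qv^t-1}$ (in the increasing range of the binomial coefficient), and $\ln\bigl(\tfrac{v^t}{m-1}\bigr)$ is \emph{at most} $\ln\bigl(\tfrac{v^t}{qv^t-1}\bigr)$, so the displayed bound with $qv^t-1$ is \emph{smaller} than the one Theorem~\ref{thm:pcan-bound2} gives you for $m=\lceil qv^t\rceil$, and your chain of inequalities does not establish it.

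The honest fix is to observe that the corollary's right-hand side, featuring $\binom{v^t}{qv^t-1}$, only literally parses when $qv^t$ is an integer; under that reading $m=qv^t$ exactly, $m-1=qv^t-1$, and the corollary is a verbatim instance of Theorem~\ref{thm:pcan-bound2} with no monotonicity needed --- which is evidently what the paper intends. If you insist on handling non-integer $qv^t$, you must either interpret the corollary's bound as shorthand for the bound at $m=\lceil qv^t\rceil$ (as you suggest in your discussion, which would be fine if you then dropped the false inequality), or argue separately that the sub-unit discrepancy between $\lceil qv^t\rceil-1$ and $qv^t-1$ is absorbed; as written, the proof claims an inequality that is false. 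Your reduction of ``$(q,t)$-completeness equal to $1$'' to the $\PCA$ property via integrality of the coverage count is correct and is the right first step.
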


Rewriting (\ref{eq:pcan-bound}), setting $r = v^t - m + 1$, and using the Taylor series expansion of $\ln \left(1 - \frac{r}{v^t}\right)$, we get
\begin{equation}\label{eq:pcan-bound-asymp}
N \le \frac{1 + \ln \left\{t{k \choose t - 1}{v^t \choose r}\right\}}{\ln \left(\frac{v^t}{v^t - r}\right)} \le \frac{v^t(t-1)\ln k}{r}\left\{1 - \frac{\ln r}{\ln k} + o(1)\right\}.
\end{equation}
Hence when $r = v(t-1)$ (or equivalently, $m = v^t - v(t-1) + 1$), there is a partial $m$-covering array with $\Theta(v^{t-1} \ln k)$ rows.
This matches the lower bound (\ref{eq:can-lower})
asymptotically for covering arrays by missing, in each $t$-set of columns, \emph{no more}
than $v(t-1)-1$ of the  $v^t$ possible rows.

The dependence of the bound (\ref{eq:pcan-bound}) on the number of $v$-ary $t$-vectors that must appear in the $t$-tuples of columns is particularly of interest when test suites are run sequentially until a fault is revealed, as in \cite{Bryce}.  Indeed the arguments here may have useful consequences for the rate of fault detection.

Lemma \ref{lem:lllsym} and hence Theorem \ref{thm:pcan-bound2} have proofs that are non-constructive in nature.
Nevertheless, Moser and Tardos \cite{moser10} provide a randomized algorithm with the same guarantee. 
Patterned on their method, Algorithm \ref{algo:m-t}  constructs a partial $m$-covering array with exactly the same number of rows as (\ref{eq:pcan-bound}) in expected polynomial time. 
Indeed, for fixed $t$, the expected number of times the resampling step (line \ref{line:re-sample}) is repeated is linear in $k$ (see \cite{moser10} for more details).

\begin{algorithm}[t]
\SetKw{Break}{break}
\KwIn{Integers $N,t,k,v$ and $m$ where $v,t \ge 2$, $k \ge 2t$ and $1 \le m \le v^t$}
\KwOut{$A$ : a $\PCA(N;t,k,v,m)$}
Let $N := \frac{1 + \ln \left\{t{k \choose t - 1}{v^t \choose m - 1}\right\}}{\ln \left(\frac{v^t}{m-1}\right)}$\; \label{line:lll-bound}
Construct an $N \times k$ array $A$ where each entry is chosen independently and uniformly at random from $[v]$\; 
\Repeat {covered $=$ true}{ \label{line:mt-loop}
    Set \emph{covered}$:=$ true\;
    \For {each column $t$-set $C \in {[k] \choose t}$}{ \label{line:online-check}
        \If {$A_C$ does not cover at least $m$ distinct $t$-tuples $x\in [v]^t$} {
            Set \emph{covered}$:=$ false\;
            Set \emph{missing-column-set} $:= C$\;
            \Break\;
        }
    }
    \If {covered $=$ false}{
        Choose all the entries in the $t$ columns of \emph{missing-column-set} independently and uniformly at random from $[v]$\; \label{line:re-sample}
    }
}
Output $A$\;

\caption{Moser-Tardos type algorithm for partial $m$-covering arrays.}
\label{algo:m-t}
\end{algorithm}

\section{Almost Partial Covering Arrays}

For $0 < \epsilon < 1$, an \emph{$\epsilon$-almost partial $m$-covering array},   $\APCA(N;t,k,v,m,\epsilon)$, is an $N \times k$ array $A$ with each entry from $[v]$ so that for at least $(1-\epsilon){k \choose t}$ column $t$-sets $C \in {[k] \choose t}$, $A_C$ covers at least $m$ distinct tuples $x \in [v]^t$.
Again,  a covering array $\CA(N;t,k,v)$ is precisely an  $\APCA(N;t,k,v,v^t, \epsilon)$ when $\epsilon < 1/ \binom{k}{t}$.
Our first result on $\epsilon$-{\em almost} partial $m$-covering arrays is the following.

\begin{theorem}\label{thm:apcan-bound}
For integers $t,k,v,m$ and real $\epsilon$ where $k \ge t \ge 2$, $v \ge 2$, $1 \le m \le v^t$ and $0 \le \epsilon \le 1$, there exists an $\APCA(N;t,k,v,m,\epsilon)$ with
\begin{equation}
N \le \frac{\ln \left\{{v^t \choose m - 1}/\epsilon\right\}}{\ln \left(\frac{v^t}{m-1}\right)}.
\end{equation}
\end{theorem}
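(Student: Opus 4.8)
The plan is to run the first-moment argument from the proof of Theorem~\ref{thm:pcan-bound1}, but to exploit the $\epsilon$-slack by comparing the expected number of ``bad'' column $t$-sets with $\epsilon\binom{k}{t}$ rather than with $1$. Put $r=v^t-m+1$ and let $A$ be a random $N\times k$ array whose entries are independent and uniform on $[v]$. For $C\in\binom{[k]}{t}$ let $B_C$ be the event that $A_C$ misses at least $r$ tuples of $[v]^t$, equivalently that $A_C$ covers fewer than $m$ tuples. Exactly as in Theorem~\ref{thm:pcan-bound1}, fixing an $r$-subset of tuples to be missed has probability $(1-r/v^t)^N$, and a union bound over all $\binom{v^t}{r}$ such subsets gives
\[\Pr[B_C]\le\binom{v^t}{r}\Bigl(1-\frac{r}{v^t}\Bigr)^{N}=\binom{v^t}{m-1}\Bigl(\frac{m-1}{v^t}\Bigr)^{N},\]
using $\binom{v^t}{r}=\binom{v^t}{m-1}$ and $1-r/v^t=(m-1)/v^t$.

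Next I would set $X=\sum_{C\in\binom{[k]}{t}}\mathbf{1}[B_C]$, the number of bad column $t$-sets, so that by linearity of expectation $\mathbb E[X]\le\binom{k}{t}\binom{v^t}{m-1}\bigl((m-1)/v^t\bigr)^{N}$. The point is that when $N$ is at least the quantity in the statement one has $\bigl(v^t/(m-1)\bigr)^{N}\ge\binom{v^t}{m-1}/\epsilon$, so the factor $\binom{k}{t}$ cancels and $\mathbb E[X]\le\epsilon\binom{k}{t}$. Since $X$ is a nonnegative random variable on a finite probability space, there is a choice of $A$ attaining $X\le\mathbb E[X]\le\epsilon\binom{k}{t}$; that array has at most $\epsilon\binom{k}{t}$ bad column $t$-sets, hence at least $(1-\epsilon)\binom{k}{t}$ column $t$-sets $C$ for which $A_C$ covers at least $m$ distinct tuples, i.e.\ it is an $\APCA(N;t,k,v,m,\epsilon)$. (As elsewhere in the paper, $N$ is read as the ceiling of the stated expression.)

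I do not anticipate a genuine obstacle here: the argument is strictly easier than that of Theorem~\ref{thm:pcan-bound1}, since we never need to push the expectation below $1$ — the cancellation of $\binom{k}{t}$ is exactly what makes the ``almost'' relaxation cheap. The only items meriting a line of care are the degenerate endpoints ($\epsilon=0$, where the bound is infinite and the claim vacuous, and $m=1$, where it is $0$) and a precise statement of the averaging step. In keeping with the algorithmic remarks after Theorem~\ref{thm:pcan-bound1} and Algorithm~\ref{algo:m-t}, I would also note that replacing averaging by Markov's inequality, $\Pr[X\ge\epsilon\binom{k}{t}]\le\mathbb E[X]/(\epsilon\binom{k}{t})$, yields a polynomial-time randomized construction succeeding with constant probability at the cost of an additive $O\!\left(1/\ln(v^t/(m-1))\right)$ in $N$, since coverage of each of the $\binom{k}{t}$ column $t$-sets can be checked directly.
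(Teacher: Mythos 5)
Your proof is correct and follows essentially the same route as the paper: a first-moment/averaging argument bounding the expected number of column $t$-sets missing at least $r=v^t-m+1$ tuples by $\epsilon\binom{k}{t}$ and solving for $N$. The identification $\binom{v^t}{r}=\binom{v^t}{m-1}$ and the cancellation of the $\binom{k}{t}$ factor are exactly the paper's computation.
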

\begin{proof}
Parallelling the proof of Theorem \ref{thm:pcan-bound1} we compute an upper bound on the expected number of $t$-sets $C\in {[k] \choose t}$ for which $A_C$ misses at least $r$ tuples $x \in [v]^t$. 
When this expected number is at most $\epsilon{k \choose t}$, an array $A$ is guaranteed to exist with at least $(1-\epsilon){k \choose t}$ $t$-sets of columns $C \in {[k] \choose t}$ such that $A_C$ misses at most $r-1$ distinct tuples $x \in [v]^t$. 
Thus $A$ is an $\APCA(N;t,k,v,m,\epsilon)$. 
To establish the theorem, solve the following  for $N$:
\begin{equation*}
{k \choose t} {v^t \choose r}\left(1 - \frac{r}{v^t}\right)^N \le \epsilon{k \choose t}.
\end{equation*}
\qed
\end{proof}

When $\epsilon < 1 / {k \choose t}$ we recover the bound from Theorem \ref{thm:pcan-bound1} for partial $m$-covering arrays.
In terms of $(q,t)$-completeness, Theorem \ref{thm:apcan-bound} yields the following.
\begin{corollary}
For $q\in [0,1]$ and  integers $2 \le t \le k$, $v \ge 2$,  there exists an $N\times k$ array on 
$[v]$ with  $(q, t)$-completeness
equal to $1-\epsilon$, with
$$N \leq \frac{\ln \left\{{v^t \choose m - 1}/\epsilon\right\}}{\ln \left(\frac{v^t}{m-1}\right)}.$$
\end{corollary}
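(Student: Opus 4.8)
The plan is to recognize the array described in the corollary as nothing other than an $\epsilon$-almost partial $m$-covering array for a suitable threshold $m$, and then to quote Theorem~\ref{thm:apcan-bound} verbatim. First I would unwind the definition of $(q,t)$-completeness recalled in Section~2: an $N\times k$ array $A$ over $[v]$ has $(q,t)$-completeness at least $1-\epsilon$ precisely when, for at least $(1-\epsilon)\binom{k}{t}$ of the column $t$-sets $C\in\binom{[k]}{t}$, the projection $A_C$ contains at least $qv^t$ of the tuples of $[v]^t$ among its rows. Taking $m:=\lceil qv^t\rceil$, this is exactly the statement that $A$ is an $\APCA(N;t,k,v,m,\epsilon)$.

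Second, I would apply Theorem~\ref{thm:apcan-bound} with this value of $m$: it yields an $\APCA(N;t,k,v,m,\epsilon)$ — hence an array of the required completeness — with $N\le \ln\{\binom{v^t}{m-1}/\epsilon\}\big/\ln(v^t/(m-1))$, which is the displayed bound (interpreting the symbol $m$ there as $\lceil qv^t\rceil$, so that $q=m/v^t$ when $qv^t\in\mathbb{Z}$). That is the whole argument; the statement is an immediate specialization of Theorem~\ref{thm:apcan-bound}.

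There is no genuine obstacle here, only two bookkeeping points that warrant a word of care. First, when $qv^t\notin\mathbb{Z}$ one must round to keep the threshold integral; since $x\mapsto\binom{v^t}{x}$ is nondecreasing and $x\mapsto\ln(v^t/x)$ is nonincreasing for $x$ below $v^t$, replacing $qv^t$ by $\lceil qv^t\rceil$ only inflates the right-hand side, so the bound survives the rounding. Second, the degenerate range $m\le 1$ (i.e.\ $q$ so small that at most one tuple is demanded) makes $\ln(v^t/(m-1))$ undefined and must be excluded or handled separately — but in that regime a single uniformly random row already witnesses completeness $1$, so the corollary holds trivially there. With these conventions the corollary follows directly.
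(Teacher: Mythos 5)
Your proposal is correct and matches the paper's (implicit) argument exactly: the corollary is just Theorem~\ref{thm:apcan-bound} restated in the language of $(q,t)$-completeness with $m=\lceil qv^t\rceil$, and the paper offers no further proof. One harmless quibble in your side remark: $x\mapsto\binom{v^t}{x}$ is not nondecreasing beyond $v^t/2$, but this does not matter since the corollary's displayed bound is already written in terms of the integer $m$, so no comparison between the bound at $qv^t$ and at $\lceil qv^t\rceil$ is actually needed.
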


When $m = v^t$, an $\epsilon$-almost covering array exists with $N \le v^t \ln \left(\frac{v^t}{\epsilon}\right)$ rows. 
Improvements result by focussing on covering arrays in which the symbols are acted on by a finite group.  
In this setting, one chooses orbit representatives of rows that collectively cover orbit representatives of $t$-way interactions under the group action; see \cite{ColCECA}, for example.
Such group actions have been used in direct and computational methods for covering arrays \cite{cck,MeagherS}, and  in randomized and derandomized methods \cite{ColCECA,SarkarColbourn2,sarkar16}.

We employ the sharply transitive action of the cyclic group of order $v$, adapting the earlier arguments using methods from \cite{sarkar16}: 

\begin{theorem}\label{thm:cyclic}
For integers $t,k,v$ and real $\epsilon$ where $k \ge t \ge 2$, $v \ge 2$ and $0 \le \epsilon \le 1$ there exists an $\APCA(N;t,k,v,v^t,\epsilon)$ with
\begin{equation}
N \le v^t \ln \left(\frac{v^{t-1}}{\epsilon}\right).
\end{equation}
\end{theorem}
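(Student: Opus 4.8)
The plan is to replace the fully random array of Theorem~\ref{thm:apcan-bound} by an array built from \emph{developed blocks} under the cyclic group $\mathbb{Z}_v$, following the orbit-representative idea of \cite{sarkar16}. Let $\mathbb{Z}_v$ act on $[v]$ by translation; this action is sharply transitive, so the induced diagonal action of $\mathbb{Z}_v$ on $[v]^t$ (adding the same scalar to every coordinate) has trivial point stabilizers, hence all orbits of size exactly $v$, and therefore exactly $v^{t-1}$ orbits. A \emph{block} is obtained by choosing a starter row $\mathbf{r} \in [v]^k$ uniformly at random and taking the $v$ rows $\mathbf{r} + c\,\mathbf{1}$ for $c \in \mathbb{Z}_v$, where $\mathbf{1}$ is the all-ones vector and arithmetic is coordinatewise modulo $v$. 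The key observation is that for every column $t$-set $C$, the projection onto $C$ of the $v$ rows of a single block is precisely the $\mathbb{Z}_v$-orbit of $\mathbf{r}_C$ in $[v]^t$; since $\mathbf{r}_C$ is uniform on $[v]^t$, this is a uniformly random one of the $v^{t-1}$ orbits, and so the block ``covers'' any fixed orbit $O$ inside $A_C$ with probability $v/v^t = v^{1-t}$.

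First I would form $A$ as the concatenation of $b$ independent such blocks, so $N = bv$. Fix $C \in \binom{[k]}{t}$ and a $\mathbb{Z}_v$-orbit $O \subseteq [v]^t$; the probability that all $b$ blocks miss $O$ in $A_C$ is $(1 - v^{1-t})^b$. A union bound over the $v^{t-1}$ orbits shows that $A_C$ fails to cover all of $[v]^t$ with probability at most $v^{t-1}(1 - v^{1-t})^b$, and linearity of expectation bounds the expected number of column $t$-sets $C$ with $A_C$ not fully covered by $\binom{k}{t}\, v^{t-1}(1 - v^{1-t})^b$. It then remains to choose $b$ so that this is at most $\epsilon\binom{k}{t}$; using $\ln\frac{1}{1-v^{1-t}} \ge v^{1-t}$ from the Taylor expansion, it suffices to take $b \ge v^{t-1}\ln\!\big(v^{t-1}/\epsilon\big)$, giving $N = bv \le v^t \ln\!\big(v^{t-1}/\epsilon\big)$. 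The probabilistic method then produces an array attaining at most the expected number of bad $t$-sets, i.e.\ an $\APCA(N;t,k,v,v^t,\epsilon)$ with the claimed bound.

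I expect the only genuine subtlety to be the group-theoretic bookkeeping: verifying that the diagonal $\mathbb{Z}_v$-action on $[v]^t$ has trivial stabilizers (so every orbit has size $v$ and there are exactly $v^{t-1}$ of them) and that the $C$-projection of a developed block is exactly one such orbit. Once that is in place, the counting is a direct parallel of the proof of Theorem~\ref{thm:apcan-bound}, with the single crucial gain that the union bound now runs over the $v^{t-1}$ orbits rather than the $v^t$ individual tuples — precisely what turns $\ln(v^t/\epsilon)$ into $\ln(v^{t-1}/\epsilon)$. Minor loose ends are the integrality of $b$ (take $b=\lceil v^{t-1}\ln(v^{t-1}/\epsilon)\rceil$ and absorb the rounding) and, if a constructive statement is desired, noting that the block structure lets a Moser--Tardos / conditional-expectation derandomization go through exactly as in the earlier algorithmic results.
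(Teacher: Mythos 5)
Your proof is correct and follows essentially the same route as the paper: a random array over orbit representatives of the diagonal $\mathbb{Z}_v$-action (the paper builds an $n\times k$ random array and develops it afterward; you describe the developed blocks up front, which is only a presentational difference), a union bound over the $v^{t-1}$ orbits, and linearity of expectation with the threshold $\epsilon\binom{k}{t}$. The Taylor-expansion step and the final bound $N \le v^t\ln(v^{t-1}/\epsilon)$ match the paper's argument exactly.
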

\begin{proof}
The action of the cyclic group of order $v$ partitions $[v]^t$  into $v^{t-1}$ orbits, each of length $v$. 
Let $n = \lfloor \frac{N}{v} \rfloor$ and let $A$ be an $n \times k$ random array where each entry is chosen independently from the set $[v]$ with uniform probability. 
For $C \in {[k] \choose t}$,  $A_C$ \emph{covers the orbit} $X$ if at least one tuple $x\in X$ is present in $A_C$. 
The probability that the orbit $X$ is not covered in $A$ is $\left(1 - \frac{v}{v^t}\right)^n = \left(1 - \frac{1}{v^{t-1}}\right)^n$. 
Let $D_C$ denote the event  that $A_C$ does not cover at least one orbit. 
Applying the union bound,  $\Pr[D_C] \le v^{t-1}\left(1 - \frac{1}{v^{t-1}}\right)^n$.
By linearity of expectation, the expected number of column $t$-sets $C$ for which $D_C$ occurs is at most ${k \choose t}v^{t-1}\left(1 - \frac{1}{v^{t-1}}\right)^n$.
As earlier,  set this expected value to be at most $\epsilon{k \choose t}$ and solve for $n$. 
An array exists that covers all  orbits in at least $(1-\epsilon){k \choose t}$ column $t$-sets. 
Develop this array over the cyclic group to obtain the desired array.
\qed
\end{proof}

As in \cite{sarkar16}, further improvements result  by considering a group, like the Frobenius group, that acts sharply 2-transitively on $[v]$. 
When $v$ is a prime power, the \emph{Frobenius group} is the group of permutations of $\mathbb{F}_v$ of the form $\{x \mapsto ax+b\,:\,a,b\in \mathbb{F}_v,\,a\neq0\}$.

\begin{theorem}\label{thm:frob}
For integers $t,k,v$ and real $\epsilon$ where $k \ge t \ge 2$, $v \ge 2$, $v$ is a prime power and $0 \le \epsilon \le 1$ there exists an $\APCA(N;t,k,v,v^t,\epsilon)$ with
\begin{equation}
N \le v^t \ln \left(\frac{2v^{t-2}}{\epsilon}\right) + v.
\end{equation}
\end{theorem}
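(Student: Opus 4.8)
The plan is to re-run the proof of Theorem~\ref{thm:cyclic} with the cyclic group replaced by the Frobenius group $G=\{x\mapsto ax+b\,:\,a\in\mathbb{F}_v^\ast,\ b\in\mathbb{F}_v\}$ of order $v(v-1)$, acting on $[v]^t=\mathbb{F}_v^t$ coordinatewise. The one genuinely new point, and the first thing to pin down, is the orbit structure of this action. The $v$ constant tuples $(c,c,\dots,c)$ form a single orbit of length $v$. Any tuple with two distinct coordinates has trivial stabilizer in $G$: a map $x\mapsto ax+b$ that fixes two distinct elements of $\mathbb{F}_v$ is the identity, because $G$ is sharply $2$-transitive on $\mathbb{F}_v$. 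Hence, by orbit--stabilizer, every non-constant tuple lies in an orbit of length $|G|=v(v-1)$, so the number of non-constant orbits is $\frac{v^t-v}{v(v-1)}=\frac{v^{t-1}-1}{v-1}<\frac{v^{t-1}}{v-1}=v^{t-2}\cdot\frac{v}{v-1}\le 2v^{t-2}$, using $v\ge2$.

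The second step is to dispose of the single constant orbit for free: adjoin to the array the $v$ constant rows $(c,c,\dots,c)$, $c\in\mathbb{F}_v$. In every column $t$-set these $v$ rows already exhibit all $v$ constant tuples, and they contribute precisely the ``$+v$'' in the claimed bound. It then remains only to cover the at most $2v^{t-2}$ non-constant orbits in almost every column $t$-set, which is handled by the seed-and-develop construction.

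Concretely, put $n=\bigl\lfloor\frac{N-v}{v(v-1)}\bigr\rfloor$, let $S$ be a random $n\times k$ array with independent uniform entries from $[v]$, and let $A$ be the array consisting of the $v$ constant rows together with the $n\,v(v-1)$ rows obtained by developing each row of $S$ over $G$; then $A$ has at most $N$ rows. Since the $G$-action commutes with projection onto a column $t$-set $C$, the rows of $A_C$ coming from the developed part form the union of the $G$-orbits of the projected seed rows $s_C$; in particular $A_C$ contains an entire non-constant orbit $X$ as soon as some $s_C$ lies in $X$. A uniform random $t$-tuple lies in $X$ with probability $|X|/v^t=(v-1)/v^{t-1}$, so all $n$ seed rows miss $X$ with probability $\bigl(1-\frac{v-1}{v^{t-1}}\bigr)^n$. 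A union bound over the at most $2v^{t-2}$ non-constant orbits, followed by linearity of expectation, bounds the expected number of column $t$-sets of $A$ that fail to cover some non-constant orbit by $\binom{k}{t}\cdot 2v^{t-2}\bigl(1-\frac{v-1}{v^{t-1}}\bigr)^n$. Requiring this to be at most $\epsilon\binom{k}{t}$ and using $-\ln(1-x)\ge x$ with $x=\frac{v-1}{v^{t-1}}$ shows that $n=\frac{v^{t-1}}{v-1}\ln\!\bigl(\frac{2v^{t-2}}{\epsilon}\bigr)$ seed rows suffice, i.e.\ that $N=n\,v(v-1)+v=v^t\ln\!\bigl(\frac{2v^{t-2}}{\epsilon}\bigr)+v$ rows suffice. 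For such an $A$, at least $(1-\epsilon)\binom{k}{t}$ column $t$-sets cover every non-constant orbit and, together with the constant rows, therefore all of $[v]^t$; this is exactly the defining property of an $\APCA(N;t,k,v,v^t,\epsilon)$.

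The only real obstacle is the orbit-structure step: one must verify carefully that sharp $2$-transitivity forces every non-constant tuple to have trivial stabilizer (so that all non-constant orbits have length $v(v-1)$) and that their number is comfortably below $2v^{t-2}$. After that the argument is a routine re-run of Theorem~\ref{thm:cyclic}; the floor in the definition of $n$ is a lower-order nuisance that can be absorbed either by the strict inequality in the orbit count or simply by allowing a few extra rows, since adjoining rows never decreases coverage.
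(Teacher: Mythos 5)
Your proposal is correct and follows essentially the same route as the paper's proof: develop a random seed array over the Frobenius group, handle the short (constant) orbit by adjoining $v$ constant rows, and apply the union bound over the at most $2v^{t-2}$ full orbits to solve for $n$. The only difference is that you supply more detail than the paper does (verifying via sharp $2$-transitivity that every non-constant orbit is full, and tracking the floor in $n$), which is a welcome elaboration rather than a deviation.
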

\begin{proof}
The action of the Frobenius group partitions $[v]^t$  into $\frac{v^{t-1}-1}{v-1}$ orbits of length $v(v-1)$ (full orbits) each and $1$ orbit of length $v$ (a short orbit).
The short orbit consists of tuples of the form $(x_1,\ldots,x_t)\in [v]^t$ where $x_1=\ldots=x_t$.
Let $n = \lfloor \frac{N-v}{v(v-1)}\rfloor$ and let $A$ be an $n \times k$ random array where each entry is chosen independently from the set $[v]$ with uniform probability. 
Our strategy is to construct $A$ so that it covers all full orbits for the required number of arrays $\{A_C :C \in {[k] \choose t}\}$. 
Develop $A$ over the Frobenius group and add $v$  rows of the form $(x_1, \ldots, x_k)\in[v]^t$ with $x_1= \ldots =x_k$ to obtain an $\APCA(N;t,k,v,v^t,\epsilon)$ with the desired value of $N$.
Following the lines of the proof of Theorem \ref{thm:cyclic},  $A$ covers all full orbits in at least $(1-\epsilon){k \choose t}$ column $t$-sets $C$ when
\[
{k \choose t}\frac{v^{t-1}-1}{v-1}\left(1 - \frac{v-1}{v^{t-1}}\right)^n \le \epsilon{k \choose t}.
\]
Because $\frac{v^{t-1}-1}{v-1} \le 2v^{t-2}$ for $v \ge 2$, we obtain the desired bound.
\qed
\end{proof}

Using  group action when $m=v^t$ affords useful improvements. 
Does this improvement extend to cases when $m < v^t$?
Unfortunately, the answer appears to be no.
Consider the case for $\PCA(N;t,k,v,m)$ when $m \le v^t$ using the action of the cyclic group of order $v$ on $[v]^t$. 
Let $A$ be a random $n \times k$ array over $[v]$. 
When $v^t-vs+1 \le m \le v^t-v(s-1)$ for $1 \le s \le v^{t-1}$, this implies that for all $C \in \binom{[k]}{t}$, $A_C$ misses at most $s-1$ orbits of $[v]^t$. 
Then we obtain that $n \le \left(1+\ln \left(t\binom{k}{t-1}\binom{v^{t-1}}{s}\right)\right)/\ln \left(\frac{v^{t-1}}{v^{t-1}-s}\right)$. 
Developing $A$ over the cyclic group we obtain a $\PCA(N;t,k,v,m)$ with 
\begin{equation}\label{eq:pcan-cyclic}
N \le v \frac{1+\ln \left\{\binom{k}{t-1}\binom{v^{t-1}}{s}\right\}}{\ln \left(\frac{v^{t-1}}{v^{t-1}-s}\right)}
\end{equation}

\begin{figure}
    \centering
    \begin{subfloat}[$t=6,\,k=20,\,v=4$]{
        \includegraphics[bb=100bp 238bp 520bp 553bp,clip,scale=0.42]{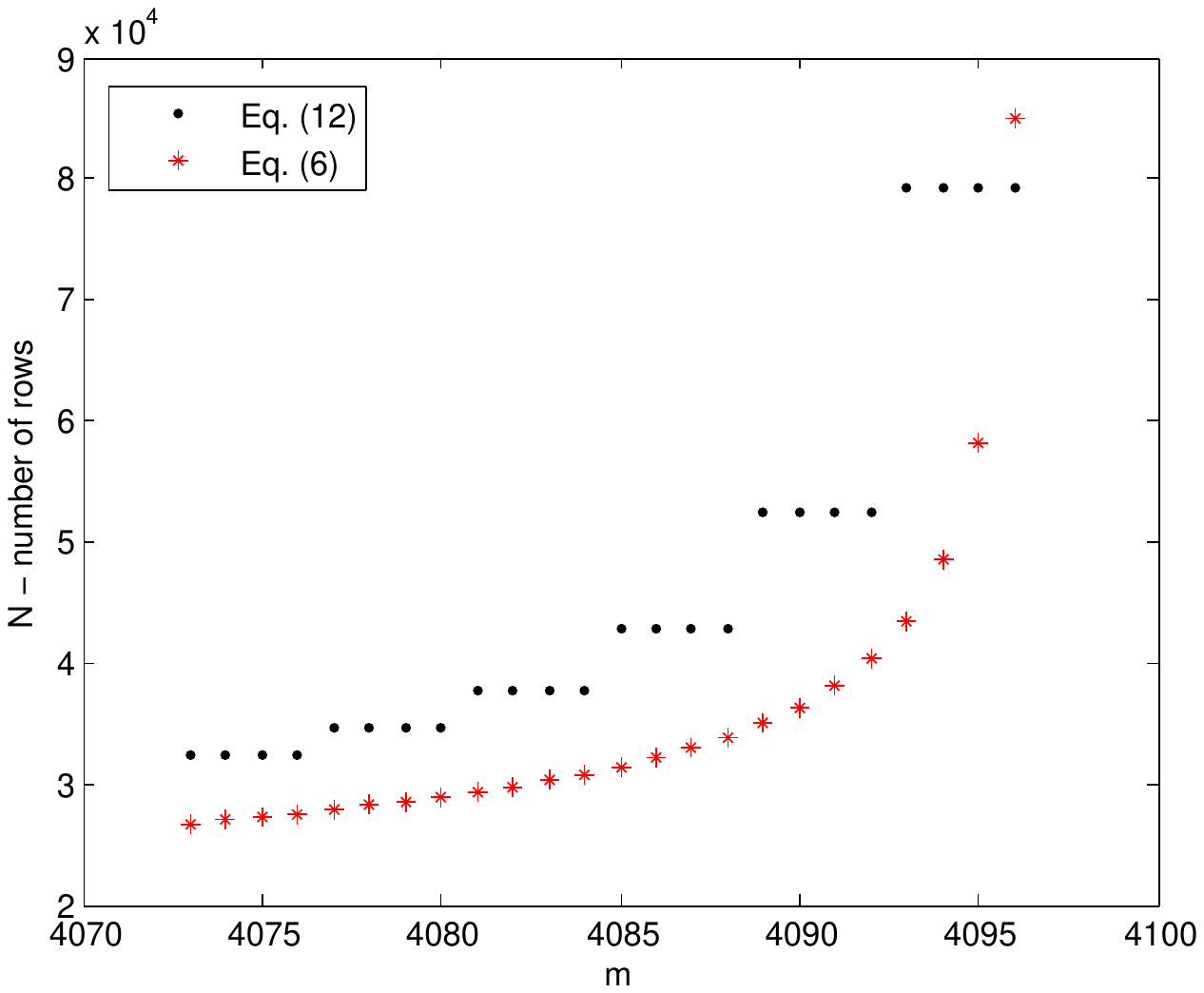}\label{fig:comp-ga-a}
        }
    \end{subfloat}
    \hspace{-0.35in} 
    \begin{subfloat}[$t=6,\,v=4,\,m=v^t-v$]{
        \includegraphics[bb=100bp 238bp 520bp 553bp,clip,scale=0.42]{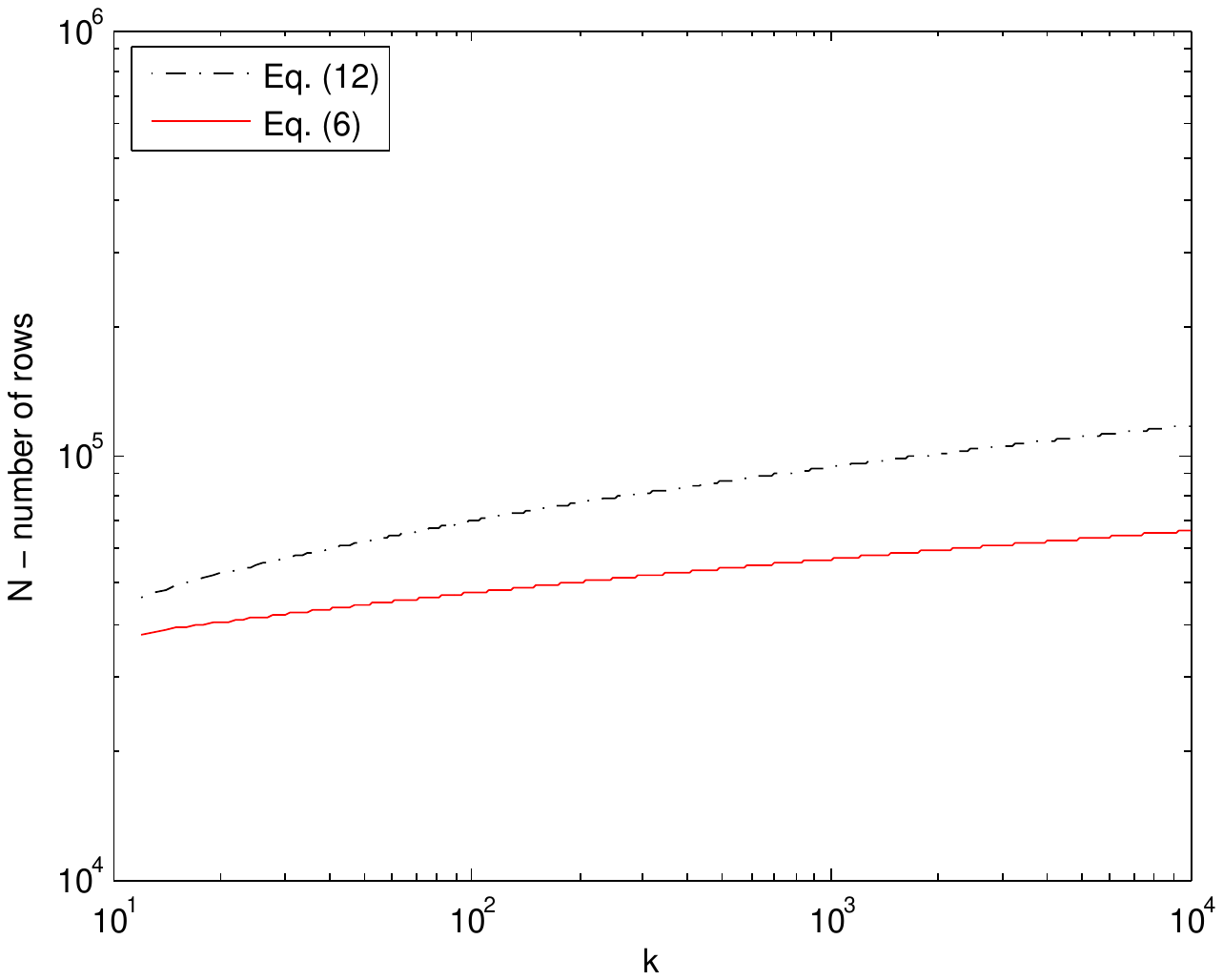}\label{fig:comp-ga-b}
        }
    \end{subfloat}
    \caption{Comparison of  (\ref{eq:pcan-cyclic}) and (\ref{eq:pcan-bound}). 
    Figure (a) compares the sizes of the partial $m$-covering arrays  when $v^t-6v+1 \le m \le v^t$. 
    Except for $m=v^t=4096$ the bound from  (\ref{eq:pcan-bound}) outperforms the bound obtained by assuming group action. 
    Figure (b) shows that for $m=v^t-v=4092$, (\ref{eq:pcan-bound})  outperforms  (\ref{eq:pcan-cyclic})  for all values of $k$.}\label{fig:comp-ga}
\end{figure}

Figure \ref{fig:comp-ga} compares  (\ref{eq:pcan-cyclic}) and  (\ref{eq:pcan-bound}). 
In Figure \ref{fig:comp-ga-a} we plot the size of the partial $m$-covering array as obtained by  (\ref{eq:pcan-cyclic}) and (\ref{eq:pcan-bound}) for $v^t-6v+1 \le m \le v^t$ and $t=6,\,k=20,\,v=4$. 
Except when $m=v^t=4096$,  the covering array case,  (\ref{eq:pcan-bound}) outperforms  (\ref{eq:pcan-cyclic}). 
Similarly, Figure \ref{fig:comp-ga-b} shows that for $m=v^t-v=4092$,  (\ref{eq:pcan-bound}) consistently outperforms  (\ref{eq:pcan-cyclic}) for all values of $k$ when $t=6,\,v=4$. 
We observe similar behavior for different values of $t$ and $v$. 

Next we consider even stricter coverage restrictions, combining Theorems \ref{thm:pcan-bound2} and \ref{thm:cyclic}.

\begin{theorem}\label{thm:concat}
For integers $t,k,v,m$ and real $\epsilon$ where $k \ge t \ge 2$, $v \ge 2$, $0 \le \epsilon \le 1$ and $m \le v^t + 1 - \frac{\ln k}{\ln (v/\epsilon^{1/(t-1)})}$ there exists an $N\times k$ array $A$ with entries from $[v]$ such that
\begin{enumerate}
\item for each $C \in {[k] \choose t}$, $A_C$ covers at least $m$ tuples $x\in[v]^t$,
\item for at least $(1 - \epsilon){k \choose t}$ column $t$-sets $C$, $A_C$ covers all tuples $x \in [v]^t$,
\item $N = O(v^t \ln \left(\frac{v^{t-1}}{\epsilon}\right))$.
\end{enumerate}
\end{theorem}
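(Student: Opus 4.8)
The plan is to obtain $A$ by vertically stacking two arrays that have already been built. First I would invoke Theorem~\ref{thm:cyclic} to produce an $\APCA(N_1;t,k,v,v^t,\epsilon)$, say $A_1$, with $N_1\le v^t\ln(v^{t-1}/\epsilon)$; this array already witnesses property~(2), since on at least $(1-\epsilon)\binom{k}{t}$ column $t$-sets $C$ the projection $(A_1)_C$ covers all of $[v]^t$. Second I would invoke Theorem~\ref{thm:pcan-bound2} (and Theorem~\ref{thm:pcan-bound1} in the range $t\le k<2t$, which has the same asymptotics under a weaker hypothesis) to produce a $\PCA(N_2;t,k,v,m)$, say $A_2$, with $N_2$ bounded by~(\ref{eq:pcan-bound}); this array witnesses property~(1). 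Let $A$ be the $(N_1+N_2)\times k$ array whose rows are those of $A_1$ together with those of $A_2$. Since adjoining rows can only enlarge the set of $t$-tuples appearing in any fixed column projection, $A_C$ covers everything that $(A_1)_C$ covers and everything that $(A_2)_C$ covers, so $A$ inherits properties~(1) and~(2) simultaneously. Hence the content of the theorem reduces to the size estimate~(3).

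For the size estimate, set $r=v^t-m+1$, so $1\le r\le v^t$, and note that the hypothesis $m\le v^t+1-\ln k/\ln(v/\epsilon^{1/(t-1)})$ is exactly the inequality $r\ge \ln k/\ln(v/\epsilon^{1/(t-1)})$. For $N_1$ I would use the identity $v^t\ln(v^{t-1}/\epsilon)=v^t(t-1)\ln(v/\epsilon^{1/(t-1)})$. For $N_2$ I would bound~(\ref{eq:pcan-bound}) crudely, using $\binom{v^t}{r}\le v^{tr}$, $t\binom{k}{t-1}\le k^t$ (valid for $k\ge t\ge 2$), and $\ln(v^t/(v^t-r))=-\ln(1-r/v^t)\ge r/v^t$, to get $N_2\le v^t(1+t\ln k)/r+t v^t\ln v$. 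Then $r\ge 1$ controls the $1/r$ term, while $r\ge \ln k/\ln(v/\epsilon^{1/(t-1)})$ turns $tv^t\ln k/r$ into at most $tv^t\ln(v/\epsilon^{1/(t-1)})$; combining with $tv^t\ln v\le tv^t\ln(v/\epsilon^{1/(t-1)})$ (since $\epsilon\le 1$) and with $t\ln(v/\epsilon^{1/(t-1)})\le 2\ln(v^{t-1}/\epsilon)$ for $t\ge 2$ yields $N_2=O(v^t\ln(v^{t-1}/\epsilon))$. Adding $N_1$ and $N_2$ gives $N=O(v^t\ln(v^{t-1}/\epsilon))$, as required.

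The only genuine obstacle is checking that $N_2$, the partial-covering layer, does not dominate: by~(\ref{eq:pcan-bound-asymp}) a $\PCA$ with $m$ close to $v^t$ can need $\Theta(v^t(t-1)\ln k/r)$ rows, and the $\ln k$ factor is fatal unless $r$ grows with $k$. The hypothesis on $m$ is calibrated precisely to force $r$ up to the threshold $\ln k/\ln(v/\epsilon^{1/(t-1)})$ at which this $\ln k$ is cancelled and the $\PCA$ cost collapses to the same order as the $\APCA$ cost from Theorem~\ref{thm:cyclic}. Everything else—the concatenation, the monotonicity of coverage under adjoining rows, and the arithmetic above—is routine, and a randomized construction running in expected polynomial time follows by running the (randomized/derandomized) procedures underlying Theorems~\ref{thm:cyclic} and~\ref{thm:pcan-bound2} and concatenating their outputs.
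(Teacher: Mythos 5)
Your proposal is correct and follows essentially the same route as the paper: vertically juxtapose the $\PCA(N_2;t,k,v,m)$ from Theorem~\ref{thm:pcan-bound2} with the $\APCA(N_1;t,k,v,v^t,\epsilon)$ from Theorem~\ref{thm:cyclic}, and observe that the hypothesis on $m$ forces $r=v^t-m+1$ large enough that the $\ln k$ in the $\PCA$ bound is absorbed, leaving both layers at $O(v^t\ln(v^{t-1}/\epsilon))$ rows. The only difference is cosmetic: the paper cites the asymptotic estimate~(\ref{eq:pcan-bound-asymp}) directly, whereas you carry out the bounding of~(\ref{eq:pcan-bound}) explicitly (and note the $k<2t$ fallback to Theorem~\ref{thm:pcan-bound1}), which is if anything more complete.
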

\begin{proof}
We  vertically juxtapose a partial $m$-covering array and an $\epsilon$-almost $v^t$-covering array.
For $r = \frac{\ln k}{\ln (v/\epsilon^{1/(t-1)})}$ and $m = v^t - r + 1$, (\ref{eq:pcan-bound-asymp}) guarantees the existence of a partial $m$-covering array with $v^t \ln \left(\frac{v^{t-1}}{\epsilon}\right)\{1+\mbox{o}(1)\}$ rows.
Theorem \ref{thm:cyclic}  guarantees the existence of an $\epsilon$-almost $v^t$-covering array with at most $v^t \ln \left(\frac{v^{t-1}}{\epsilon}\right)$ rows. 
\qed
\end{proof}

\begin{corollary}\label{cor:concat}
There exists an $N \times k$ array $A$ such that:
\begin{enumerate}
\item for any $t$-set of columns $C \in {[k] \choose t}$, $A_C$  covers at least $m \le v^t + 1 - v(t-1)$ distinct $t$-tuples $x\in [v]^t$,
\item for at least $\left(1-\frac{v^{t-1}}{k^{1/v}}\right){k \choose t}$ column $t$-sets $C$, $A_C$ covers all the distinct $t$-tuples $x\in [v]^t$.
\item $N = O(v^{t-1}\ln k)$.
\end{enumerate}
\end{corollary}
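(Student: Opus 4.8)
The plan is to obtain Corollary~\ref{cor:concat} as a direct instantiation of Theorem~\ref{thm:concat} with a carefully chosen value of $\epsilon$. The guiding observation is that Theorem~\ref{thm:concat} produces an array with $N = O(v^t \ln(v^{t-1}/\epsilon))$ rows, so to reach the target $N = O(v^{t-1}\ln k)$ one needs $\ln(v^{t-1}/\epsilon)$ to be of order $\tfrac{1}{v}\ln k$; the natural choice is therefore $\epsilon = v^{t-1}/k^{1/v}$, which is precisely the completeness deficit appearing in item~(2) of the corollary.

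First I would check that this choice is admissible: $\epsilon = v^{t-1}/k^{1/v}$ lies in $[0,1]$ as soon as $k \ge v^{v(t-1)}$, and since the statement is asymptotic in $k$ we may assume $k$ is at least this large. Next I would carry out the substitution into the hypothesis of Theorem~\ref{thm:concat}. Writing $\epsilon^{1/(t-1)} = v\,k^{-1/(v(t-1))}$, one gets $v/\epsilon^{1/(t-1)} = k^{1/(v(t-1))}$, hence $\ln\!\bigl(v/\epsilon^{1/(t-1)}\bigr) = \frac{\ln k}{v(t-1)}$, and consequently $\frac{\ln k}{\ln(v/\epsilon^{1/(t-1)})} = v(t-1)$. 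Thus the constraint $m \le v^t + 1 - \frac{\ln k}{\ln(v/\epsilon^{1/(t-1)})}$ of Theorem~\ref{thm:concat} becomes exactly $m \le v^t + 1 - v(t-1)$, matching item~(1) of the corollary.

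Then I would invoke Theorem~\ref{thm:concat} with this $\epsilon$ and $m$. Items~(1) and~(2) of that theorem yield, respectively, item~(1) of the corollary and the statement that at least $(1-\epsilon)\binom{k}{t} = \bigl(1 - v^{t-1}/k^{1/v}\bigr)\binom{k}{t}$ of the $N \times t$ subarrays cover all of $[v]^t$, which is item~(2). For item~(3), item~(3) of Theorem~\ref{thm:concat} gives $N = O\!\bigl(v^t \ln(v^{t-1}/\epsilon)\bigr)$, and since $v^{t-1}/\epsilon = k^{1/v}$ we have $\ln(v^{t-1}/\epsilon) = \frac{\ln k}{v}$, so $N = O\!\bigl(v^t \cdot \tfrac{1}{v}\ln k\bigr) = O(v^{t-1}\ln k)$.

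I do not expect any genuine obstacle: the corollary is a parameter substitution, and the only point that needs a word of care is verifying $\epsilon \le 1$ (equivalently, that $k$ is large enough) so that Theorem~\ref{thm:concat} applies. If desired, the small regime $k < v^{v(t-1)}$ can be absorbed into the $O(\cdot)$ by noting that a full covering array already has $N = O(v^{t-1}\ln k)$ rows there via (\ref{eq:can-upper}), but this is a minor remark rather than part of the main argument.
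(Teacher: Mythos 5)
Your proof is correct and takes essentially the same route as the paper: both apply Theorem~\ref{thm:concat} with $\epsilon = v^{t-1}/k^{1/v}$, so that $\frac{\ln k}{\ln(v/\epsilon^{1/(t-1)})} = v(t-1)$ and $v^t\ln(v^{t-1}/\epsilon) = v^{t-1}\ln k$. The paper merely presents the choice of $\epsilon$ as the balancing point between the upper bound forced by allowing at most $f(t)=v(t-1)-1$ missing tuples and the lower bound forced by the row count, whereas you instantiate it directly and verify; your added check that $\epsilon\le 1$ is a sensible refinement.
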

\begin{proof}
Apply Theorem \ref{thm:concat} with $m = v^t + 1 - \frac{\ln k}{\ln (v/\epsilon^{1/(t-1)})}$.
There are at most $\frac{\ln k}{\ln (v/\epsilon^{1/(t-1)})} -1$  missing $t$-tuples $x \in [v]^t$  in the $A_C$ for each of the at most $\epsilon{k\choose t}$  column $t$-sets $C$ that do not satisfy the second condition of Theorem \ref{thm:concat}.
To bound from above the number of missing tuples to  a certain small function $f(t)$ of  $t$,
it is sufficient that $\epsilon \le v^{t-1}\left(\frac{1}{k}\right)^\frac{t-1}{f(t)+1}$. 
Then the number of missing $t$-tuples $x \in [v]^t$ in $A_C$ is bounded from above by $f(t)$ whenever
$\epsilon$ is not larger than 
\begin{equation}\label{up}
v^{t-1}\left(\frac{1}{k}\right)^\frac{t-1}{f(t)+1}
\end{equation}

On the other hand, in order for the number $N=O\left(v^{t-1}\ln \left(\frac {v^{t-1}}{\epsilon}\right)\right)$ of rows of $A$ to be asymptotically equal to the lower bound  (\ref{eq:can-lower}), it suffices that $\epsilon$ is not smaller than
\begin{equation}\label{low}
 {v^{t-1}\over k^{\frac {1}{v}}}.
 \end{equation} 
When $f(t)=v(t-1)-1$,  (\ref{up}) and (\ref{low}) agree asymptotically, completing the proof.
\qed
\end{proof}

Once again we obtain a size that is $O(v^{t-1}\!\log k)$,  a goal that has not been reached for covering arrays.  
This is evidence that even a small relaxation of covering arrays provides arrays of the best sizes one can hope for.

Next we consider the efficient construction of the arrays whose existence is ensured by Theorem \ref{thm:concat}.
Algorithm \ref{algo:apca} is a randomized method to construct an $\APCA(N;t,k,v,m,\epsilon)$ of a size $N$ that is very close to the bound of Theorem \ref{thm:apcan-bound}.
By Markov's inequality the condition in line \ref{line:cond} of Algorithm \ref{algo:apca} is met with probability at most $1/2$. 
Therefore, the expected number of times the loop in line \ref{line:loop} repeats is at most $2$. 

To prove Theorem \ref{thm:apcan-bound}, $t$-wise independence among the variables is sufficient. 
Hence, Algorithm \ref{algo:apca} can be derandomized using $t$-wise independent random variables. 
We can also derandomize the algorithm using the method of conditional expectation. 
In this method we construct $A$ by considering the $k$ columns one by one and fixing all  $N$ entries of a column.
Given a set of already fixed columns, to fix the entries of the next column we consider all possible $v^N$ choices, and choose one that provides the maximum conditional expectation of the number of column $t$-sets $C \in \binom{[k]}{t}$ such that $A_C$ covers at least $m$ tuples $x\in[v]^t$. 
Because $v^N=O(\mathsf{poly}(1/\epsilon))$, this derandomized algorithm constructs the desired array in polynomial time.
Similar randomized and derandomized strategies can be applied to construct the array guaranteed by Theorem \ref{thm:cyclic}. 
Together with Algorithm \ref{algo:m-t} this implies that the array  in Theorem \ref{thm:concat} is also efficiently constructible.

\begin{algorithm}[t]
\SetKw{Break}{break}
\KwIn{Integers $N,t,k,v$ and $m$ where $v,t \ge 2$, $k \ge 2t$ and $1 \le m \le v^t$, and real $0<\epsilon<1$}
\KwOut{$A$ : an $\APCA(N;t,k,v,m,\epsilon)$}
Let $N :=\frac{\ln \left\{2{v^t \choose m - 1}/\epsilon\right\}}{\ln \left(\frac{v^t}{m-1}\right)}$\;  
\Repeat {isAPCA $=$ true}{ \label{line:loop}
	Construct an $N \times k$ array $A$ where each entry is chosen independently and uniformly at random from $[v]$\;
    Set \emph{isAPCA}$:=$ true\;
    Set \emph{defectiveCount}$:=$ 0\;
    \For {each column $t$-set $C \in {[k] \choose t}$} {
        \If {$A_C$ does not cover at least $m$ distinct $t$-tuples $x\in [v]^t$} {
            Set \emph{defectiveCount}$:=$ \emph{defectiveCount} + $1$\;
            \If {\emph{defectiveCount} $>$ $\lfloor\epsilon\binom{k}{t}\rfloor$}{ \label{line:cond}
				Set \emph{isAPCA}$:=$ false\;
            	\Break\;
            }
        }
    }
}
Output $A$\;

\caption{Randomized algorithm for $\epsilon$-almost partial $m$-covering arrays.}
\label{algo:apca}
\end{algorithm}

\section{Final Remarks}
We have  shown that by relaxing the  coverage requirement of a covering array somewhat, powerful upper bounds on the sizes of the  arrays can be established.
Indeed the upper bounds are  substantially smaller than the best known bounds for a covering array;  they are of the same order as the \emph{lower} bound for $\CAN(t,k,v)$.
As importantly, the techniques  not only provide asymptotic bounds but also  randomized polynomial time construction algorithms for such arrays. 

Our approach seems flexible 
enough to handle  variations of these problems. For instance, some applications require arrays that satisfy, for different
subsets of columns, different coverage or separation requirements \cite{Co2004}.
In \cite{GY2006} 
several interesting examples of  combinatorial problems are presented that can 
be unified and expressed in the framework  of $S$-constrained matrices.
Given a set of vectors $S$ each of length $t$, an $N\times k$ 
matrix $M$ is \emph{$S$-constrained} if for every $t$-set $C\in \binom{[k]}{t}$, $M_C$ contains as a row   each of the vectors in $S$.
The parameter to optimize is, as usual, the number of rows of $M$.
One potential direction is to ask for arrays that, in every $t$-tuple of columns, cover at least $m$ of the vectors in $S$, or that all vectors in $S$ are covered by all but a small number of $t$-tuples of columns.  Exploiting the structure of the members of $S$ appears to require an extension of the results developed here.

\section*{Acknowledgements}

Research of KS and CJC was supported in part by the National Science Foundation under Grant No. 1421058.

\bibliographystyle{plain}
\bibliography{CoveringArray}

\begin{thebibliography}{10}

\bibitem{alon08}
Noga Alon and Joel~H. Spencer.
\newblock {\em The probabilistic method}.
\newblock Wiley-Interscience Series in Discrete Mathematics and Optimization.
  John Wiley \& Sons, Inc., Hoboken, NJ, third edition, 2008.

\bibitem{BB88}
B.~Becker and H.-U. Simon.
\newblock How robust is the {$n$}-cube?
\newblock {\em Inform. and Comput.}, 77:162--178, 1988.

\bibitem{Bryce}
Ren\'ee~C. Bryce, Yinong Chen, and Charles~J. Colbourn.
\newblock Biased covering arrays for progressive ranking and composition of web
  services.
\newblock {\em Int. J. Simulation Process Modelling}, 3(1/2):80--87, 2007.

\bibitem{Caw2003}
J.~N. Cawse.
\newblock Experimental design for combinatorial and high throughput materials
  development.
\newblock {\em GE Global Research Technical Report}, 29:769--781, 2002.

\bibitem{CKMZ1983}
Ashok~K. Chandra, Lawrence~T. Kou, George Markowsky, and Shmuel Zaks.
\newblock On sets of boolean n-vectors with all k-projections surjective.
\newblock {\em Acta Informatica}, 20(1):103--111, 1983.

\bibitem{cck}
M.~A. Chateauneuf, C.~J. Colbourn, and D.~L. Kreher.
\newblock Covering arrays of strength 3.
\newblock {\em Des. Codes Crypt.}, 16:235--242, 1999.

\bibitem{Chen}
Baiqiang Chen and Jian Zhang.
\newblock Tuple density: a new metric for combinatorial test suites.
\newblock In {\em Proceedings of the 33rd International Conference on Software
  Engineering, {ICSE} 2011, Waikiki, Honolulu , HI, USA, May 21-28, 2011},
  pages 876--879, 2011.

\bibitem{Co2004}
C.~J. Colbourn.
\newblock Combinatorial aspects of covering arrays.
\newblock {\em Le Matematiche (Catania)}, 58:121--167, 2004.

\bibitem{ColCECA}
C.~J. Colbourn.
\newblock Conditional expectation algorithms for covering arrays.
\newblock {\em Journal of Combinatorial Mathematics and Combinatorial
  Computing}, 90:97--115, 2014.

\bibitem{Co2011}
Charles~J. Colbourn.
\newblock Covering arrays and hash families.
\newblock In D.~Crnkovi\v{c} and V.~Tonchev, editors, {\em Information
  Security, Coding Theory, and Related Combinatorics}, NATO Science for Peace
  and Security Series, pages 99--135. IOS Press, 2011.

\bibitem{dam}
Peter Damaschke.
\newblock Adaptive versus nonadaptive attribute-efficient learning.
\newblock {\em Machine Learning}, 41(2):197--215, 2000.

\bibitem{FS2015}
N.~{Franceti{\'c}} and B.~{Stevens}.
\newblock {Asymptotic size of covering arrays: an application of entropy
  compression}.
\newblock {\em ArXiv e-prints}, March 2015.

\bibitem{GKV93}
L.~Gargano, J.~K\"{o}rner, and U.~Vaccaro.
\newblock Sperner capacities.
\newblock {\em Graphs and Combinatorics}, 9:31--46, 1993.

\bibitem{GSS96}
A.~P. Godbole, D.~E. Skipper, and R.~A. Sunley.
\newblock $t$-covering arrays: upper bounds and {P}oisson approximations.
\newblock {\em Combinatorics, Probability and Computing}, 5:105--118, 1996.

\bibitem{GHLS1993}
N.~Graham, F.~Harary, M.~Livingston, and Q.F. Stout.
\newblock Subcube fault-tolerance in hypercubes.
\newblock {\em Information and Computation}, 102(2):280 -- 314, 1993.

\bibitem{GY2006}
Sylvain Gravier and Bernard Ycart.
\newblock S-constrained random matrices.
\newblock {\em DMTCS Proceedings}, 0(1), 2006.

\bibitem{Ha2005}
A.~Hartman.
\newblock Software and hardware testing using combinatorial covering suites.
\newblock In M.~C. Golumbic and I.~B.-A. Hartman, editors, {\em
  Interdisciplinary Applications of Graph Theory, Combinatorics, and
  Algorithms}, pages 237--266. Springer, Norwell, MA, 2005.

\bibitem{HR2004}
Alan Hartman and Leonid Raskin.
\newblock Problems and algorithms for covering arrays.
\newblock {\em Discrete Mathematics}, 284(1–3):149 -- 156, 2004.

\bibitem{Jukna}
Stasys Jukna.
\newblock {\em Extremal Combinatorics: With Applications in Computer Science}.
\newblock Springer Publishing Company, Incorporated, 1st edition, 2010.

\bibitem{Ka1973}
G.~O.~H. Katona.
\newblock Two applications (for search theory and truth functions) of {S}perner
  type theorems.
\newblock {\em Periodica Math.}, 3:19--26, 1973.

\bibitem{KlSp1973}
D.~Kleitman and J.~Spencer.
\newblock Families of k-independent sets.
\newblock {\em Discrete Math.}, 6:255--262, 1973.

\bibitem{KKL2013}
D.~R. Kuhn, R.~Kacker, and Y.~Lei.
\newblock {\em Introduction to Combinatorial Testing}.
\newblock CRC Press, 2013.

\bibitem{K2013+}
D.~R. Kuhn, I.~D. Mendoza, R.~N. Kacker, and Y.~Lei.
\newblock Combinatorial coverage measurement concepts and applications.
\newblock In {\em Software Testing, Verification and Validation Workshops
  (ICSTW), 2013 IEEE Sixth International Conference on}, pages 352--361, March
  2013.

\bibitem{MKTK201}
J.~R. Maximoff, M.~D. Trela, D.~R. Kuhn, and R.~Kacker.
\newblock A method for analyzing system state-space coverage within a $t$-wise
  testing framework.
\newblock In {\em 4th Annual IEEE Systems Conference}, pages 598--603, 2010.

\bibitem{MeagherS}
K.~Meagher and B.~Stevens.
\newblock Group construction of covering arrays.
\newblock {\em J. Combin. Des.}, 13:70--77, 2005.

\bibitem{moser10}
Robin~A. Moser and G{\'a}bor Tardos.
\newblock A constructive proof of the general {L}ov\'asz local lemma.
\newblock {\em J. ACM}, 57(2):Art. 11, 15, 2010.

\bibitem{SarkarColbourn2}
K.~Sarkar and C.~J. Colbourn.
\newblock Two-stage algorithms for covering array construction.
\newblock {\em submitted for publication}.

\bibitem{sarkar16}
K.~{Sarkar} and C.~J. {Colbourn}.
\newblock {Upper bounds on the size of covering arrays}.
\newblock {\em ArXiv e-prints}, March 2016.

\bibitem{Sha2001}
D.~E. Shasha, A.~Y. Kouranov, L.~V. Lejay, M.~F. Chou, and G.~M. Coruzzi.
\newblock Using combinatorial design to study regulation by multiple input
  signals: A tool for parsimony in the post-genomics era.
\newblock {\em Plant Physiol.}, 127:1590--2594, 2001.

\bibitem{tong}
A.~J. Tong, Y.~G. Wu, and L.~D. Li.
\newblock Room-temperature phosphorimetry studies of some addictive drugs
  following dansyl chloride labelling.
\newblock {\em Talanta}, 43(9):1429—1436, September 1996.

\end{thebibliography}
\end{document}